\journalname{BIT}
\algnewcommand\INPUT{\item[\textbf{Input:}]}%
\algnewcommand\OUTPUT{\item[\textbf{Output:}]}%
\newtheorem{prop}{Proposition}
\definecolor{orange}{rgb}{1,0.5,0}
\newcommand{\ts}{\thinspace}
\newcommand{\modb}[1]{\textcolor{black}{#1}\index {#1}}
\newcommand{\moda}[1]{\textcolor{black}{#1}\index {#1}}
\newcommand{\modg}[1]{#1}
\newcommand{\modgg}[1]{\textcolor{black}{#1}\index {#1}}
\newcommand{\modkk}[1]{\textcolor{black}{#1}\index {#1}}
\begin{document}

\title{Explicit Stabilised Gradient Descent for  Faster Strongly Convex Optimisation}


\author{Armin Eftekhari          \and  Bart Vandereycken  \\ Gilles Vilmart  \and  Konstantinos C. Zygalakis\thanks{The authors are ordered alphabetically.}
}


\institute{
		A.\ Eftekhari\at
              Department of Mathematics and Mathematical Statistics,  \\ Umea Universitet,  \\
              901 87 Umea Sweden
              \email{armin.eftekhari@umu.se}           
           \and
           B.\ Vandereycken \at
              \modg{Section of Mathematics\\
              University of Geneva\\
              CP 64, 1211 Geneva 4,} Switzerland\\
             \email{bart.vandereycken@unige.ch}
             \and 
             G.\ Vilmart \at 
              \modg{Section of Mathematics\\
              University of Geneva\\
              CP 64, 1211 Geneva 4,} Switzerland\\
              \email{gilles.vilmart@unige.ch}
              \and 
              K.\ C.\ Zygalakis\at
              School of Mathematics\\
              University of Edinburgh\\
              Edinburgh EH9 3FD, UK\\
              \email{k.zygalakis@ed.ac.uk}
              }


\maketitle

\begin{abstract}
This paper introduces the \modgg{Runge-Kutta Chebyshev descent method (RKCD)} for strongly
convex optimisation problems. This new algorithm is based on explicit stabilised
integrators for stiff differential equations, a powerful class of numerical schemes that 
avoid the severe step size restriction faced by standard explicit integrators. For optimising quadratic and strongly convex functions, this paper  proves that RKCD nearly
achieves the optimal convergence rate of the conjugate gradient algorithm, and the 
suboptimality of RKCD diminishes as the condition number of the quadratic function 
worsens. It is established that this optimal rate is obtained also for a {partitioned}
variant of 
RKCD applied to perturbations of quadratic functions. In addition, numerical 
experiments on general strongly convex problems show that RKCD outperforms Nesterov's accelerated gradient descent.
\keywords{Runge-Kutta methods \and Strongly Convex Optimization \and Accelerated Gradient Descent}
\subclass{90C25 \and 65L20}
\end{abstract}

\section{Introduction}
Optimisation is at the heart of many applied mathematical 
and statistical problems, while its beauty lies in the simplicity of 
describing the problem in question. 
In this work, given a function $f\colon \mathbb{R}^{d} \to \mathbb{R}$, {we are} 
interested in finding a minimiser
 $x_{*} \in \mathbb{R}^d$ of the \moda{problem} 
\begin{equation}
\min_{x\in \mathbb{R}^d}f(x).
\label{eq:main}
\end{equation} 
We make the common assumption throughout that $f \in \mathcal{F}_{\ell,L}$, namely, the set of $\ell$-strongly convex differentiable functions that have $L$-Lipschitz continuous derivative~\cite{N14}. 
{Corresponding to $f$ is its \emph{gradient flow}, defined as 
\begin{equation} \label{eq:gradient}
\frac{dx}{dt}=- \nabla f(x), \quad  x(0)=x_{0}\in \mathbb{R}^d,
\end{equation}
where $x_0$ is its initialisation. It is easy to see that traversing the gradient flow always reduces the value of $f$. Indeed, for any positive $h$, it holds that 
\begin{equation}
f(x(h)) - f(x_0) = -\int^{h}_{0} \|\nabla f(x(t))\|_2^2 \,dt \modg{\leq 0}.
\end{equation}
By discretising the gradient flow in~\eqref{eq:gradient}, we can design various optimisation algorithms for~\eqref{eq:main}. For example, by substituting \modg{in~\eqref{eq:gradient}} the approximation 
\begin{equation}
\label{eq:app1}
\frac{dx}{dt}\modg{(t_n)} \approx \frac{x(t_n+h)-x(t_n)}{h}, 
\end{equation}
we obtain the gradient descent (GD) method as the iteration
\begin{equation} \label{eq:Euler}
x_{n+1} = x_n - h   \nabla f(x_n), \quad n=0,1,2,\ldots
\end{equation} 
Here, $x_n$ is the numerical approximation of $x(t_n)$ for all $n$ and $h>0$ is the step size~\cite{N14}. For this discretisation to remain \emph{stable}, that is, for $x_n$ in~\eqref{eq:Euler} to remain close to the exact gradient flow $x(t_n)$ and\moda{, consequently,} for the value of $f$ to reduce in every iteration, the step size $h$ must not be too large.

Indeed, a well-known shortcoming of GD is that we must take $h \le 2/L$  to ensure stability, otherwise $f$ might increase from one iteration to the next \cite{N14}. \modkk{One can consider a different discretization of \eqref{eq:gradient}, by for example substituting in~\eqref{eq:gradient}  the approximation~\eqref{eq:app1} at $x(t_{n}+h)$ instead of $x(t_n)$. We then arrive at the update}
\begin{equation}
x_{n+1} = x_n -h  \nabla f(x_{n+1}),
\label{eq:implicit}
\end{equation}
which is known as the \emph{implicit Euler method} in numerical analysis 
\cite{HaW96} because, as the name suggests, it involves solving~\eqref{eq:implicit} for 
$x_{n+1}$. It is not difficult to see that, unlike GD, there is no \modg{size restriction} on the step size $h$ 
for the implicit Euler method \modg{to decay}, a property 
\modg{related to its algebraic stability~\cite{HaL14}}. Moreover, it is  easy to verify that $x_{n+1}$ in 
\eqref{eq:implicit} is  also the unique minimiser of the \moda{problem}
\begin{equation}
\min_{x\in \mathbb{R}^d} \, h f(x)+ \frac{1}{2}\|  x - x_{n}\|_2^2;
\end{equation}
the map from $x_n$ to $x_{n+1}$ is known in the optimisation literature as the \emph{proximal map} of the function $h f$ \cite{NB14}. Unfortunately, even if $\nabla f$ is known explicitly, solving~\eqref{eq:implicit} for $x_{n+1}$ or equivalently computing the proximal map is often just as hard as solving  ~\eqref{eq:main}, with  a few notable exceptions \cite{NB14}. This setback  severely limits the applicability of the  proximal algorithm in~\eqref{eq:implicit} for solving  \moda{problem}~\eqref{eq:main}.

\vspace {-0.3cm}
\paragraph{Contributions.}
With this motivation, we propose the \modgg{\emph{Runge-Kutta Chebyshev descent} (RKCD)} method  for solving  \moda{problem}~\eqref{eq:main}. RKCD offers the best of both worlds, namely the computational tractability of GD (explicit Euler method) and the stability of the proximal algorithm (implicit Euler method). Inspired by \cite{SRB17},  {RKCD} uses \emph{explicit stabilised methods} \cite{SSV98,AbM01,Abd02} to discretise the gradient flow (\ref{eq:gradient}). 

\modg{For the numerical integration of stiff problems,}
explicit stabilised methods provide a computationally efficient alternative to the implicit Euler method for stiff differential equations, where standard integrators face a severe step size restriction, in particular 
for spatial discretisations of \moda{high-dimensional} diffusion PDEs; see the review~\cite{Abd11}.  Every iteration of RKCD consists of $s$ \emph{internal stages}, where each stage performs a simple GD-like update. 
Unlike GD however, RKCD does \emph{not} decay monotonically along its internal stages, 
which allows it to take longer steps and travel faster along the gradient flow. 
After $s$ internal stages, RKCD ensures that its new iterate is stable, namely, the value of $f$ indeed decreases after each iteration of RKCD. 

\modkk{Recently, there has been a revived interest about the design and the interpretation  of optimization methods as discretizations of ODEs \cite{SRB17}. In particular, discrete gradient methods were used in \cite{ERR18} for the integration of \eqref{eq:gradient}  and shown to have similar properties to the gradient descent for (strongly) convex objective functions. In addition, the work in  \cite{WMW19}, considers numerical discretizations of a rescaled version of the gradient flow \eqref{eq:gradient} and shows that acceleration can be achieved when extra smoothness assumptions are imposed to the objective function $f$. Furthermore, in \cite{SBC16,WWJ16} an alternative second-order differential equation to the gradient flow was introduced containing a momentum term. Similarly, to the spirit of this work,  a number of different numerical discretizations including Runge-Kutta methods were used for the integration of this second-order equation and shown to  behave in an accelerated manner \cite{zhang2018direct,shi2019acceleration,BJW18}. 
Our method, on the other hand, can achieve similar acceleration by a direct integration of the gradient flow~\eqref{eq:gradient} and does not need to include such a momentum term explicitly. }

The rest of this paper is organised as follows. Section~\ref{sec:RKCD} formally introduces RKCD, which is summarised in Algorithm \ref{alg:RKCD} and accessible without reading the rest of this paper. In Section \ref{sec:quadratic prog}, we then quantify the performance of RKCD for solving strongly convex quadratic programs, while in Section~\ref{sec:PRKCD} we introduce and study theoretically a 
\moda{composite} 
variant of RKCD applied to perturbations of quadratic functions.} Then in Section~\ref{sec:numerics}, we empirically compare RKCD to other first-order optimisation algorithms and conclude that RKCD improves over the state of the art in practice. This paper concludes with an overview of the remaining theoretical challenges.

\section{Explicit stabilised gradient descent \label{sec:RKCD}}
{Let us start with the simple scalar \moda{problem} where $f(x) = \tfrac{1}{2}\lambda x^2$, that is,
\begin{equation}
\min_{x\in\mathbb{R}} \,\tfrac{1}{2}\lambda x^2, \qquad \lambda > 0,
\label{eq:quad scalar}
\end{equation}
and consider the corresponding gradient flow
\begin{equation}
\frac{dx}{dt}=-\lambda x,\qquad x(0)=x_0\in \mathbb{R}, 
\label{eq:test}
\end{equation}
also known as the \emph{Dahlquist test equation} \cite{HaW96}.
It is obvious from~\eqref{eq:test} that $\lim_{t \rightarrow 
\infty} x(t)=0$ and any sensible optimisation algorithm should provide iterates {$\{x_n\}_n$} with a similar property, that is,
\begin{equation}
\lim_{n \to \infty} x_n = 0.
\label{eq:good lim}
\end{equation}
For GD, which corresponds to the explicit Euler disctretisation of~\eqref{eq:test}, it easily follows from~\eqref{eq:Euler} that 
\begin{equation}
x_{n+1} = R_{gd}(z) x_n, 
\quad 
R_{gd}(z) = 1+z,\quad z = -\lambda h,
\label{eq:GD stab fcn}
\end{equation}
where $R_{gd}$ is the \emph{stability polynomial} of GD. Hence,~\eqref{eq:good lim} holds if $z\in \mathcal{D}_{gd}$, where the \emph{stability domain} $\mathcal{D}_{gd}$ of GD is defined as 
\begin{equation} \label{eq:defS}
\mathcal{D}_{gd} = \{z\in \mathbb{C}\ ;\ |R_{gd}(z)| < 1\}. 
\end{equation}
That is,~\eqref{eq:good lim} holds if $h \in (0,2/\lambda)$, which imposes a severe limit on the time step $h$ when $\lambda$ is large. Beyond this limit, namely, for larger step sizes, the iterates of GD might not necessarily reduce the value of $f$ or, put differently, the explicit Euler method might no longer be a faithful discretisation of the gradient flow. 

At the other extreme, for the proximal algorithm which corresponds to the implicit Euler discretisation of the gradient flow in~\eqref{eq:test}, it follows from~\eqref{eq:implicit} that 
\begin{equation}
x_{n+1} = R_{pa}(z) x_n,
\quad 
R_{pa}(z) = \frac{1}{1-z},
\quad  z = -\lambda h,
\end{equation}
with the stability domain 
$$
\mathcal{D}_{pa} = \left\{ z\in \mathcal{C}\ ; |R_{pa}(z)|<1 \right\}. 
$$
Therefore,~\eqref{eq:good lim} holds for \emph{any} positive step size $h$. This property is known as A-stability of a numerical method~\cite{HaW96}. Unfortunately{, the proximal algorithm} (implicit Euler method) is often computationally intractable, particularly in higher dimensions. 

In numerical analysis, explicit 
stabilised methods for discretising the gradient flow 
offer the best of both worlds, as they are not only explicit and thus computationally tractable, but  they also share some favourable stability 
properties of the implicit 
method. Our main contribution in this work is adapting these methods for optimisation, as detailed next. 



{For discretising the gradient flow~\eqref{eq:test}, the key idea behind explicit stabilised methods is to relax the requirement that every step of explicit Euler method should remain stable, namely, faithful to the gradient flow. This relaxation in turn allows the explicit stabilised method to take longer steps and traverse the gradient flow faster. 
To be specific, applying any given explicit \emph{Runge-Kutta method} with $s$ stages (\moda{i.e.,} 
$s$ evaluations of $\nabla f$) per step to~\eqref{eq:test} yields a recursion of the form
\begin{equation}
x_{n+1} = R_s(z) x_n,
\quad R_s (z) = 1+z + a_2 z^2 + \ldots + a_s z^s, 
\label{eq:explicit stabilised}
\end{equation}
with the corresponding stability domain 
$
\mathcal{D}_s = \left\{ z\in \mathbb{C}\ ; |R_s(z)| < 1 \right\}
$.
We wish to choose $\{a_j\}_{j=2}^s$ to maximise the step size $h$ while ensuring that $z=-h\lambda$ still remains in the stability domain $\mathcal{
D}_{s}$, namely, for the update of the explicit stabilised method to remain stable. More formally, we wish to solve 
\begin{equation}
\displaystyle \max_{a_2,\cdots, a_s} L_s 
\quad \text{subject to}\quad  |R_s(z)| \le 1,\quad   \forall z\in [-L_s,0]. 
\label{eq:stab_polynomial}
\end{equation}}

As shown in \modg{\cite{HaW96} (see also~\cite{Abd11})}, the solution to~\eqref{eq:stab_polynomial} is $L_{s}=2s^{2}$ and, after substituting the optimal values for $\{a_2\}_{j=1}^s$ in~\eqref{eq:explicit stabilised}, we find that the {unique} corresponding $R_s(z)$ is the shifted Chebyshev polynomial $R_{s}(z)=T_{s} (1+z /s^{2})$ where $T_s(\cos \theta)=\cos(s\theta)$ is the Chebyshev polynomial of the first kind with {degree} $s$. In Figure~\ref{fig:stabdetplot}, $R_{s}(z)$ is depicted as $\eta=0$ in red. It is clear from panel (b) that $R_{s}(z)$ equi-oscillates between $-1$ and $1$ on $z \in [-L_s,0]$, which is a typical property of minimax polynomials. As a consequence, after every $s$ internal stages, the new iterate of the explicit stabilised method remains stable and faithful to~\eqref{eq:test}, while travelling the most along the gradient flow.

{Numerical stability is still an issue for the explicit stabilised method outlined above, particularly for the values of $z=-\lambda h$ for which $|R_s(z)|=1$. As seen on the top of Figure \ts\ref{fig:stabdetplot}(a), even the slightest numerical imperfection due to round-off will land us outside of the stability domain $\mathcal{D}_{s}$, which might make the algorithm unstable. }
In addition, {for such values of $z$ \moda{where $|R_s(z)|=1$}, the new iterate $x_{n+1}$ is not necessarily any closer to the minimiser, here the origin.} 
\modb{As a solution, it is common practice (see, e.g., \cite{HaW96}) to 
tighten the stability requirement  to  $|R_s(z)| \leq \alpha_s(\eta) <1$ for every $z \in [-L_{s,\eta},-\delta_{\eta}]$. A popular choice is to introduce a positive dampening parameter $\eta$ so that the stability function satisfies} 
\begin{equation} \label{eq:exp_stab}
R_{s}(z)=\frac{T_{s}(\omega_{0}+\omega_{1}z)}{T_{s}(\omega_{0})}
, \quad \omega_{0}=1+\frac{\eta}{s^{2}}, \quad \omega_{1}
=\frac{T_{s}(\omega_{0})}{T'_{s}(\omega_{0})}.
\end{equation}
\moda{Now, $R_s(z)$ oscillates  between $-\alpha_s(\eta) $ and $ \alpha_s(\eta) $ for every $z\in [-L_{s,\eta},-\delta_\eta]$, where
\begin{equation}\label{eq:alpha}
\alpha_s(\eta) = \frac1{T_s(\omega_0)} <1,
\qquad 
L_{s,\eta} = \frac{1+\omega_0}{\omega_1}.
\end{equation}
In fact, $L_{s,\eta} \simeq  (2-\frac{4}3 \eta)s^2$ is close to the optimal stability domain size $L_{s,0}=2s^2$} for small $\eta$; see 
\cite{Abd11}.}
It also follows from~\eqref{eq:explicit stabilised} that 
\[
|x_{n+1}| \leq \alpha_s(\eta) |x_n|,
\]
namely, the new iterate of the explicit stabilised method is indeed closer to the minimiser. 
In addition, as we can see in Figure \ref{fig:stabdetplot},  introducing damping also ensures that a strip around the negative real axis is included in the complex stability domain $\mathcal{D}_s$, which grows in the imaginary direction as the damping parameter $\eta$ increases.
\moda{We also point out that, }while a small damping $\eta=0.05$ is usually sufficient for standard stiff problems, the benefit of large damping $\eta$ was first exploited in~\cite{AbC08} in the context of stiff stochastic differential equations and later improved in~\cite{AAV18} using second kind Chebyshev polynomials.

 \begin{figure}

    \begin{subfigure}[t]{0.95\textwidth}
      \centering
      \includegraphics[width=0.95\linewidth]{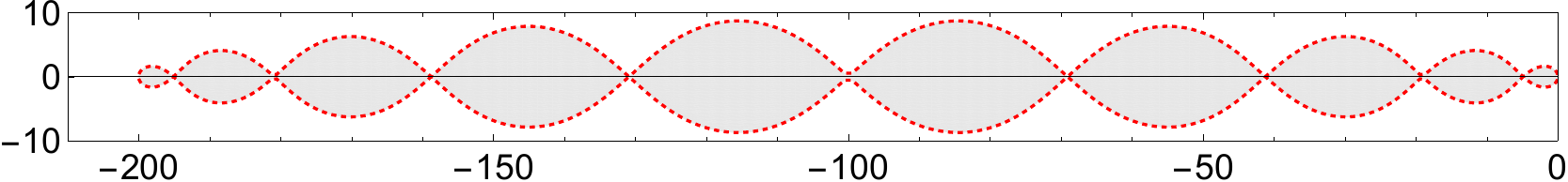}\\[2.ex]
      \includegraphics[width=0.95\linewidth]{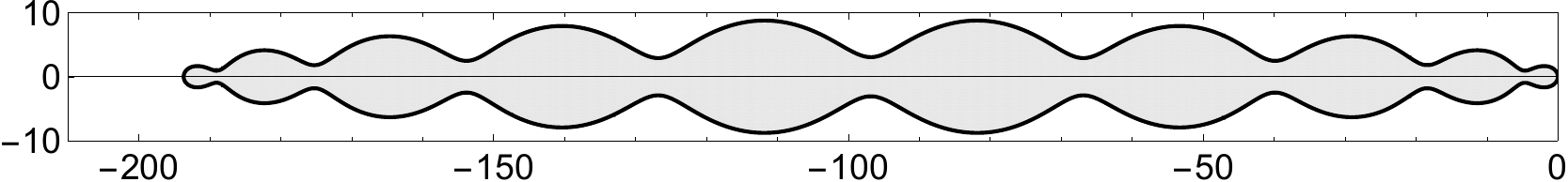}\\[2.ex]
      \includegraphics[width=0.95\linewidth]{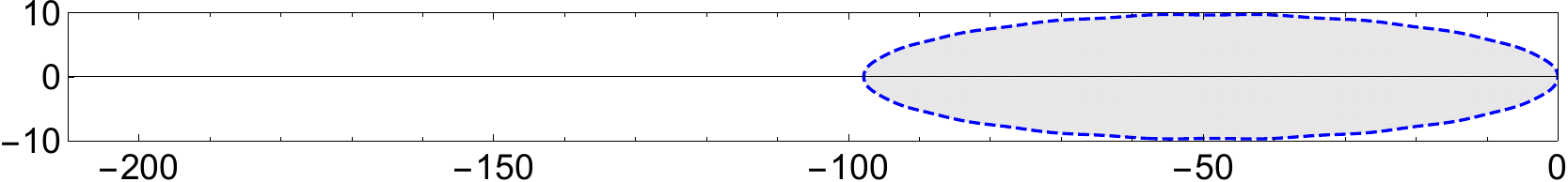}
      \caption{Complex stability domains $\mathcal{D}_{s}$: level set of $|R_s(z)| \leq 1$ for complex $z$.}
    \end{subfigure}\\[2ex]
    \begin{subfigure}[t]{0.95\textwidth}
      \centering
      \includegraphics[width=0.95\linewidth]{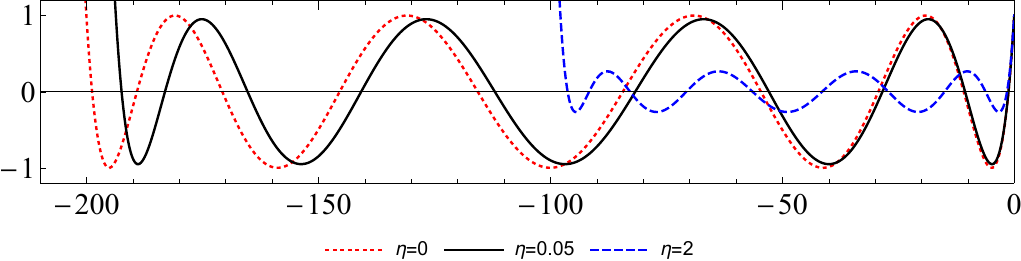}
      \caption{Graph of the stability function $R_{s}(z)$ as function of real $z$.}
    \end{subfigure}
    \caption{
      Stability domains and stability functions of the Chebyshev method with $s=10$ stages and different damping values $\eta=0, 0.05$, and $2$.
      \label{fig:stabdetplot}}
  \end{figure}

It is relatively {straightforward} to generalise the ideas from above to integrate general multivariable gradient flows, thus not only the scalar test function~\eqref{eq:test}.  {For the algorithmic implementation to be numerically stable}, however, one \moda{should} not evaluate the Chebyshev polynomials $T_s(z)$ {{naively} but instead} use their well-known three-term recurrence \modgg{$T_{j+1}(x)=2xT_j(x) - T_{j-1}(x)$, for $j=1,2,\ldots$ (with $T_0(x)=1$) in the implementation of the methods} \cite{VaS80}. We dub this algorithm \modgg{the Runge-Kutta Chebyshev descent method (RKCD), summarised in Algorithm \ref{alg:RKCD}.}

\begin{algorithm}
\caption{\modgg{Runge-Kutta Chebyshev descent method (RKCD)} for solving  ~\eqref{eq:main} \label{alg:RKCD}}

\textbf{Input:}
The gradient $\nabla f$ of a differentiable function $f:\mathbb{R}^d\rightarrow\mathbb{R}$.
Damping $\eta$ (e.g., $\eta=1.17$). Lower and upper bounds $\ell$ and $L$ for eigenvalues of $\nabla^2 f$.  Initialisation $x_0\in\mathbb{R}^d$. 

\vspace{0.1cm}

\textbf{Body:} 
Until convergence, \textbf{repeat}:
\begin{itemize}
  \item Compute $h$ and $s$ using~\eqref{eq:condition_2a} and
\begin{equation}\label{eq:defn of parameters}
\omega_{0}=1+\frac{\eta}{s^{2}}, \quad \omega_{1}
=\frac{T_{s}(\omega_{0})}{T'_{s}(\omega_{0})},
\end{equation}
with $T_s$ is the Chebyshev polynomial of the first kind with degree $s$. 
\item Set $x_n^0 = x_n$ and $x_n^1 = x_n^0- h\mu_1 \nabla f(x_n)$ with $\mu_1 = \omega_1/ \omega_0$
\item For $j\in \{2,\cdots,s\}$, \textbf{repeat}:
$$x_n^j = -\mu_j h \nabla f(x_n^{j-1}) + \nu_j x_{n}^{j-1}-(\nu_j-1) x_{n}^{j-2},$$ 
\begin{equation*}
\mu_j = \frac{2\omega_1 T_{j-1}(\omega_0)}{T_j(\omega_0)},
\quad \nu_j = \frac{2\omega_0 T_{j-1}(\omega_0)}{T_j(\omega_0)}.
\end{equation*}
\item Set $x_{n+1}=x_{n}^s$. 
\end{itemize}

\vspace{0.1cm}

\textbf{Output: } 
Estimate $\widehat{x}={x}_{n+1}$ of a minimiser of  ~\eqref{eq:main}. 

\end{algorithm}

\section{{Strongly convex quadratic } \label{sec:quadratic prog}}

Consider the \moda{problem}  ~\eqref{eq:main} with 
\begin{equation} \label{eq:quad}
f(x)=\tfrac{1}{2}x^{T}Ax-b^{T}x,
\end{equation}
where $A \in \mathbb{R}^{d \times d}$ 
is a positive definite and symmetric matrix, and $b \in \mathbb{R}^{d}$. As the next \modg{proposition} shows, the convergence of \moda{any} Runge-Kutta method (such as GD, proximal algorithm)  depends on the eigenvalues of $A$; the proof can be found in the appendix.
\begin{prop} \label{prp:basic}
For solving \moda{problem}  (\ref{eq:main}) with $f$ as in~(\ref{eq:quad}), consider an optimisation algorithm with stability function $R$ (for example, $R=R_{gd}$ for GD \moda{in (\ref{eq:GD stab fcn})}) \moda{and} step size $h$. Let $\{x_n\}_{n\ge 0}$ be the iterates of this algorithm.  Also \moda{assume that}  $0 <  \ell = \lambda_{1} \leq \cdots \leq \lambda_{d} = L$, \moda{where $\{\lambda_{i}\}_i$ are} the eigenvalues of $A$. Then, for every \moda{iteration} $n\ge 0$, it holds
\begin{equation}\label{eq:decay_quad}
f(x_{n+1})-f(x_{*}) \leq   \max_{1 \leq i \leq d}R^{2}(-h \lambda_{i}) \, ( f( x_{n})- f(x_{*}))
\end{equation}  
with $x_*$ the minimizer of $f$.
\end{prop}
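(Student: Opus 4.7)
My plan is to exploit the fact that, when $f$ is quadratic, the gradient flow $\dot x=-\nabla f(x)=-A(x-x_*)$ is a linear autonomous ODE, so any Runge--Kutta method (explicit or implicit) applied to it decouples in the eigenbasis of $A$. Concretely, let $A=Q\Lambda Q^\top$ with $\Lambda=\mathrm{diag}(\lambda_1,\ldots,\lambda_d)$ and $Q$ orthogonal, and let $x_*=A^{-1}b$ be the unique minimiser. Changing variables to $y_n=Q^\top(x_n-x_*)\in\mathbb{R}^d$, I would first establish that the method's update reads componentwise as
\[
y_{n+1}^{(i)}=R(-h\lambda_i)\,y_n^{(i)},\qquad i=1,\ldots,d.
\]

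To justify this, I would argue as follows. A Runge--Kutta method produces $x_{n+1}$ as a linear combination of $x_n$ and evaluations of $\nabla f$ at affine combinations of the stages (for implicit methods, the stages are obtained by solving a linear system in the stages). For a linear gradient $\nabla f(x)=A(x-x_*)$, every stage is therefore an affine function of $x_n-x_*$ whose coefficient is a polynomial or rational expression in $hA$. Thus there is a single scalar function $P$ such that $x_{n+1}-x_*=P(hA)(x_n-x_*)$, and specialising the same method to the Dahlquist test equation $\dot u=-\lambda u$ gives $u_{n+1}=P(-h\lambda)u_n$, which by definition of the stability function means $P=R$. Since $P(hA)$ is simultaneously diagonalised by $Q$ together with $A$, the componentwise identity above follows.

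Given that componentwise contraction, the rest is a direct computation. Using
\[
f(x)-f(x_*)=\tfrac{1}{2}(x-x_*)^\top A(x-x_*)=\tfrac{1}{2}\sum_{i=1}^d \lambda_i\,(y^{(i)})^2,
\]
I would estimate
\[
f(x_{n+1})-f(x_*)=\tfrac{1}{2}\sum_{i=1}^d\lambda_i R^2(-h\lambda_i)(y_n^{(i)})^2\le \max_{1\le i\le d}R^2(-h\lambda_i)\cdot\bigl(f(x_n)-f(x_*)\bigr),
\]
which is exactly the claimed inequality.

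The main obstacle, and really the only nontrivial point, is the algebraic step of showing that the Runge--Kutta recurrence reduces to $P(hA)(x_n-x_*)$ with $P$ equal to the scalar stability function $R$. This is textbook for explicit methods, and follows for implicit methods from noting that the defining linear system for the stages has a unique solution given by a rational function of $hA$ applied to $x_n-x_*$, so that simultaneous diagonalisation with $A$ reduces the whole iteration to $d$ independent copies of the scalar Dahlquist equation. All subsequent steps are then routine.
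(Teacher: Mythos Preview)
Your proof is correct and follows essentially the same approach as the paper: diagonalise $A$, shift by the minimiser so that the gradient flow decouples into $d$ independent Dahlquist test equations, apply the definition of the stability function componentwise, and bound the resulting weighted sum. If anything, you are more explicit than the paper in justifying why the Runge--Kutta update on the linear ODE equals $R(-hA)(x_n-x_*)$; the paper simply asserts this from the scalar case.
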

\noindent Let us next apply Proposition \ref{prp:basic} to both GD and RKCD.


\paragraph{Gradient descent. }
\moda{Recalling~\eqref{eq:GD stab fcn}}, it is not difficult to verify that 
\begin{equation} \label{eq:max_R}
\max_{1 \leq i \leq d}R_{gd}^{2}(-\lambda_{i}h)  =
\begin{cases}
R_{gd}^{2}(-\ell h), \quad \text{if} \quad 0< h \leq 
\frac{2}{\ell+L}, \\
\\
R_{gd}^{2}(- L h ), \quad \text{if} \quad   h  \geq  
\frac{2}{\ell+L}.
 \end{cases}
\end{equation}
It \moda{then} follows from~\eqref{eq:max_R} that we must take $ h \in (0,2/L)$ for GD to be stable, namely, for GD to reduce the value of $f$ \moda{in every iteration}.
In addition, one obtains the best possible decay \moda{rate}
by choosing $h 
=2/(\ell+L)$. More specifically,  we have that 
\begin{align*}
\min_{h} \max_{1 \leq i \leq d} R_{gd}^2(-\lambda_i h)   
 = \left(\frac{\kappa-1}{\kappa+1} \right)^2,
\end{align*}
where $\kappa = L/\ell$ is the \emph{condition number} of \moda{the} matrix $A$.
That is, the best convergence rate for GD is predicted by Proposition \ref{prp:basic} as 
\begin{equation}
f(x_{n+1})-f(x^*) \le \left(\frac{\kappa-1}{\kappa+1} \right)^2 (f(x_{n})-f(x^*)),
\end{equation}
achieved with the step size $h=2/(\ell+L)$. Remarkably, the same conclusion holds 
for any function in $ \mathcal{F}_{\ell,L}$, as discussed in  \cite{LRP16}.

\paragraph{Explicit stabilised gradient descent.}
In the case of Algorithm \ref{alg:RKCD}, there are three different parameters that need to be chosen, namely, the step size $h$, the number of internal stages $s$, and the damping factor $\eta$. For \moda{a} fixed positive $\eta$, let us next select $h$ and $s$ so that the numerator of \moda{$R_s(z)$ in}~\eqref{eq:exp_stab}, namely $|T_s(\omega_0+\omega_1 z)|$, is bounded by one. Equivalently, we take $h,s$ such that 
\begin{equation} \label{eq:stabconst}
-1 \le \omega_0 - \omega_1 L h  \le \omega_0 - \omega_1 \ell h \leq 1.
\end{equation}
For an efficient algorithm, we choose the smallest step size $h$ and the smallest number $s$ of internal stages such that 
\eqref{eq:stabconst} holds. 
More specifically,~\eqref{eq:stabconst} dictates that  $\kappa \leq (1+\omega_0)/(-1+\omega_0) = 1+2s^2/\eta$; see (\ref{eq:defn of parameters}). 
This in turn determines the parameters $s$ and $h$ as
\begin{equation} \label{eq:condition_2a}
s =\left\lceil \sqrt{{(\kappa-1)\eta}/2} \right \rceil, 
\qquad h = \frac{\omega_0-1}{\omega_1 \ell},
\end{equation}
with $\kappa = L/\ell$. 
Under~\eqref{eq:stabconst} and using the definitions in (\ref{eq:exp_stab},\ref{eq:alpha}), we find  that 
$$
|R_{rkcd}(-\lambda_i h )|\leq \alpha_s(\eta)
$$
for every $1 \leq i \leq d$.  Then, an immediate consequence of Proposition \ref{prp:basic} is 
\begin{equation}\label{eq:decayf}
f(x_{n+1})-f(x_{*}) \leq  \alpha_s(\eta)^2 ( f( x_{n})- f(x_{*})).
\end{equation}
 %

\noindent Given that every iteration of RKCD consists of $s$ internal stages---{hence, with the same cost as $s$ GD steps---}it is natural to define the \emph{effective} convergence rate of RKCD as 
\begin{equation} \label{eq:rate}
c_{rkcd}(\kappa) =\alpha_{s}(\eta)^{2/s}.
\end{equation}
The following result, proved in the appendix, evaluates the effective convergence rate of RKCD, as given by~\eqref{eq:rate}, \moda{in} the limit of $\eta \rightarrow \infty$.
\begin{prop} \label{prp:eff}
With the choice of step size $h$ and number $s$ of internal stages  in (\ref{eq:condition_2a}), the effective convergence rate $c_{rkcd}(\kappa)$ of RKCD for solving  \moda{problem}~(\ref{eq:main}) with $f$  given in (\ref{eq:quad}) satisfies 
\begin{eqnarray} \label{eq:effective_rate}
\lim_{\eta \rightarrow \infty} c_{rkcd}(\kappa) &=& c_{opt}(\kappa)+O( \kappa^{-3/2} ), \nonumber \\
 c_{opt}(\kappa) &=& \left(\frac{\sqrt{\kappa}-1}{\sqrt{\kappa}+1}\right)^2.
\end{eqnarray}
\end{prop}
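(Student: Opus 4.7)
The plan is to rewrite $c_{esgd}(\kappa) = \alpha_s(\eta)^{2/s}=T_s(\omega_0)^{-2/s}$ in closed form via the hyperbolic representation of the Chebyshev polynomial, take the limit $\eta \to \infty$ under the prescription $s=\lceil\sqrt{(\kappa-1)\eta/2}\,\rceil$, and recognise the resulting expression as $c_{opt}(\kappa)$. Since $\omega_0 = 1 + \eta/s^2 > 1$, one has $T_s(\omega_0) = \cosh(su)$ with $u := \operatorname{arccosh}(\omega_0)$, and hence
\[
c_{esgd}(\kappa) \;=\; \bigl(\cosh(su)\bigr)^{-2/s} \;=\; 2^{2/s}\, e^{-2u}\,\bigl(1+e^{-2su}\bigr)^{-2/s}.
\]
The task reduces to tracking each of the three factors as $\eta\to\infty$.

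\textbf{Step 1: limit of $\omega_0$ and $u$.} Writing $s = \sqrt{(\kappa-1)\eta/2}+r$ with $r\in[0,1)$, a direct calculation gives
\[
\frac{\eta}{s^2} \;=\; \frac{2}{\kappa-1}\cdot\frac{1}{1+\mathcal{O}\!\bigl(\sqrt{(\kappa-1)/\eta}\bigr)} \;\xrightarrow[\eta\to\infty]{}\; \frac{2}{\kappa-1},
\]
so $\omega_0 \to (\kappa+1)/(\kappa-1)$ and $u\to u_*:=\operatorname{arccosh}\!\bigl((\kappa+1)/(\kappa-1)\bigr)$. Because $\kappa>1$ implies $u_*>0$, we have $su\to\infty$, and the two correction factors satisfy $2^{2/s}\to 1$ and $(1+e^{-2su})^{-2/s}\to 1$. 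Consequently,
\[
\lim_{\eta\to\infty} c_{esgd}(\kappa) \;=\; e^{-2u_*}.
\]

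\textbf{Step 2: evaluation of $e^{-2u_*}$.} Using $\operatorname{arccosh}(x)=\log\bigl(x+\sqrt{x^2-1}\bigr)$ with $x=(\kappa+1)/(\kappa-1)$, a short calculation gives $x^2-1 = 4\kappa/(\kappa-1)^2$, so
\[
e^{u_*} \;=\; \frac{\kappa+1 + 2\sqrt{\kappa}}{\kappa-1} \;=\; \frac{(\sqrt\kappa+1)^2}{(\sqrt\kappa-1)(\sqrt\kappa+1)} \;=\; \frac{\sqrt\kappa+1}{\sqrt\kappa-1}.
\]
Squaring and inverting yields $e^{-2u_*} = \bigl((\sqrt\kappa-1)/(\sqrt\kappa+1)\bigr)^2 = c_{opt}(\kappa)$, completing the identification of the limit.

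\textbf{Expected obstacle and the $\mathcal{O}(\kappa^{-3/2})$ term.} The main delicacy is the discontinuous dependence of $s$ on $\eta$ through the ceiling: $\omega_0$ does not approach its limit monotonically, so one must show that the finite-$s$ corrections to $\cosh(su)^{-2/s}$ are not only $o(1)$ as $s\to\infty$ but also absorb into an $\mathcal{O}(\kappa^{-3/2})$ error uniformly in large $\kappa$. I would bound $|2^{2/s}-1|$ and $|(1+e^{-2su})^{-2/s}-1|$ using $s\gtrsim\sqrt{\kappa\eta}$ and $u_*\asymp 1/\sqrt{\kappa}$ (the latter from the Taylor expansion $\operatorname{arccosh}(1+2/(\kappa-1)) = 2/\sqrt{\kappa-1} + \mathcal{O}(\kappa^{-3/2})$), and combine these with the first-order sensitivity of $e^{-2u}$ in $u$. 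The resulting expansion $c_{opt}(\kappa) = 1 - 4/\sqrt{\kappa} + 8/\kappa - 12\kappa^{-3/2}+\mathcal{O}(\kappa^{-2})$ confirms that the truncation error introduced in the above passes is consistent with the stated $\mathcal{O}(\kappa^{-3/2})$ remainder.
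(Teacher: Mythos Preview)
Your approach is the paper's: write $T_s(\omega_0)=\cosh\bigl(s\operatorname{arcosh}\omega_0\bigr)$, use $\cosh(su)^{-2/s}\to e^{-2u}$ as $s\to\infty$ (equivalently your factorisation $2^{2/s}e^{-2u}(1+e^{-2su})^{-2/s}$), and identify the limit. Your Step~2 is in fact sharper than the paper's version, which records the limit as $e^{-2\operatorname{arcosh}(1+2/\kappa)}$ and then matches $c_{opt}(\kappa)$ only up to $\mathcal{O}(\kappa^{-3/2})$; your computation with the correct limit $\omega_0\to(\kappa+1)/(\kappa-1)$ shows that $\lim_{\eta\to\infty}c_{esgd}(\kappa)=c_{opt}(\kappa)$ \emph{exactly}, so the $\mathcal{O}(\kappa^{-3/2})$ remainder in the statement is trivially zero. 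This also means your ``Expected obstacle'' paragraph is off target: the ceiling and finite-$s$ corrections vanish in the $\eta\to\infty$ limit for each fixed $\kappa$, and no uniform-in-$\kappa$ control is needed to justify the statement as written.
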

\noindent Above, $c_{opt}(\kappa)$ is the optimal convergence rate of a first-order algorithm, which is achieved by the conjugate gradient (CG) algorithm for quadratic $f$; see~\cite{N14}. Put differently, Proposition \ref{prp:eff} states that RKCD nearly achieves the optimal convergence rate in the limit of $\eta\rightarrow\infty$\footnote{In practice,  as $\eta$ becomes larger the number of stages $s$ grows; see (\ref{eq:condition_2a}). \moda{Therefore, the largest possible} $\eta$ \moda{is dictated by} the computational budget in terms of gradient evaluations.}.  
\moda{Moreover, it is  perhaps remarkable that}
the performance of RKCD relative to the conjugate gradient \emph{improves} as the condition number of $f$ worsens, namely, as $\kappa$ increases. The non-asymptotic behaviour of RKCD is numerically investigated in Figure \ref{fig:figc1}, corroborating Proposition \ref{prp:eff}.
As illustrated in Section~\ref{sec:numerics}, Algorithm~\ref{alg:RKCD} \moda{can also be used effectively for optimization problems with non-quadratic $f \in \mathcal{F}_{\ell,L}$}.\footnote{We remark that the parameters $h$ and $s$ should then also be chosen using~\eqref{eq:condition_2a}, where $\kappa=L/\ell$.}
\begin{figure}[tb]
\hspace{-1cm}
		\includegraphics[width=1.1\linewidth]{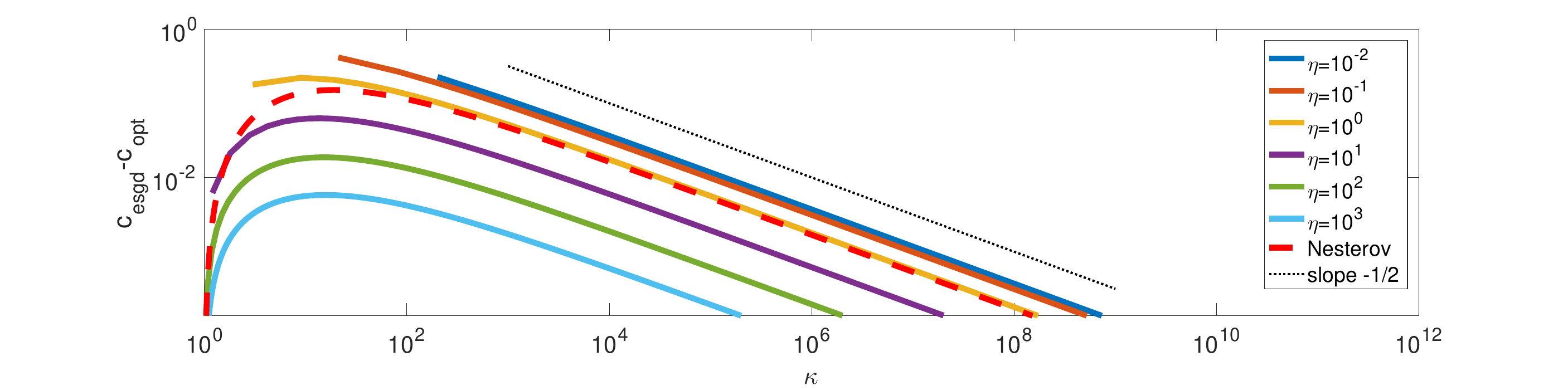}
		\caption{This figure considers the non-asymptotic scenario from Prop.~\ref{prp:eff} and plots, as a function of the condition number $\kappa$,  the difference $c_{rkcd}(\kappa)-c_{opt}(\kappa)$ of the (effective) convergence rate of RKCD compared to the optimal one, for many fixed values of the damping parameter $\eta$. 
In this plot,	we observe that $c_{rkcd}(\kappa) - c_{opt}(\kappa)$ decays as $C_\eta/\sqrt{\kappa}$ for $\kappa\rightarrow \infty$ and for fixed $\eta$, see the slope $-1/2$ in the plot.
Numerical evaluations suggest that the constant $C_\eta=O(1/\sqrt{\eta})$ becomes arbitrarily small as $\eta$ grows, which corroborates Proposition \ref{prp:eff}. %
	For comparison, we also include the result $c_{agd}(\kappa)-c_{opt}(\kappa)$ for the optimal rate of the Nestrov's accelerated gradient descent (AGD) given by
	$c_{agd} = (1-2/\sqrt{3\kappa+1})^2$
	\protect \cite{LRP16}.
Comparing the two algorithms in Section \ref{sec:numerics} we find that RKCD outperforms AGD, namely $c_{rkcd} \leq c_{agd}$ for $\eta\geq \eta_0$,	where $\eta_0\simeq 1.17$ is a moderate size constant.
\label{fig:figc1}}
	\end{figure}

\section{Perturbation of a quadratic objective function} \label{sec:PRKCD}

Proposition \ref{prp:eff} shows us that, \moda{for} strongly convex quadratic 
problems, one can recover the optimal convergence rate of the 
conjugate gradient for large values of the damping parameter $\eta$.
Here we discuss a modification of Algorithm \ref{alg:RKCD} to a 
specific class of nonlinear problems for which we can prove the 
same convergence rate as in the quadratic case. More precisely, we  consider 
\begin{equation} \label{eq:comp}
f(x)=\tfrac{1}{2}x^{T}Ax+g(x),
\end{equation}
where $A$ is \moda{a} positive definite matrix for which we know \moda{(bounds on)} the smallest 
and largest eigenvalue,  and $g$ is a $\beta$-smooth convex 
function \cite{N14}. 
Inspired by \cite{Zbi11}, we consider the  modification\footnote{In the case where $g(x)=0$ this 
algorithm coincides with  Algorithm \ref{alg:RKCD}  for $\nabla f(x)=Ax$.} of Algorithm 
\ref{alg:RKCD} specifically designed for the minimization of \moda{composite} functions of the form 
\eqref{eq:comp}. We call this method 
\modgg{the partitioned Runge-Kutta Chebyshev descent method (PRKCD)} 
and show in Proposition~\ref{thm:partitioned} (proved in the appendix) that it matches the rate given by the analysis of 
quadratic problems. 
\modgg{PRKCD is derived as a variant of the RKCD method applied to the target function
\eqref{eq:comp}, where the nonquadratic term is replaced by the linearization 
$g(x)\simeq g(x_n) + (x-x_n)^T\nabla g(x_n)$, as described in Algorithm~\ref{alg:CRKCD}.
In practice, PRKCD is implemented almost identically to the RKCD method, except that the gradient terms $\nabla g$ in $\nabla f(x)=Ax+\nabla g(x)$ are evaluated at $x_n$ in all the intermediate steps $j=1,\ldots,s$ described in Algorithm~\ref{alg:RKCD}. Hence,}
PRKCD is a numerically stable implementation of the update
\begin{align} \label{eq:comp_scheme}
x_{n+1} & = R_{s}(-Ah)x_{n}-B_s(-Ah)\nabla g(x_{n}), \\ B_s(z)  &=\frac{1-R_{s}(z)}{z}, \nonumber
\end{align}
where $R_{s}(z)$ is the stability function given by~\eqref{eq:exp_stab}. It is worth noting that this method has only one evaluation of $\nabla g$ per step of the algorithm, which can be advantageous if the evaluations of $\nabla g$ are costly. 

\begin{center}
\begin{algorithm}
\caption{\modgg{Partitioned Runge-Kutta Chebyshev descent method (PRKCD) for minimizing}  ~\eqref{eq:comp} \label{alg:CRKCD}}

\textbf{Input:}
The gradient $\nabla g$ of a differentiable function $g:\mathbb{R}^d\rightarrow\mathbb{R}$, and the matrix $A\in \mathbb{R}^{d\times d}$. Damping $\eta$ (e.g., $\eta=1.17$). Lower and upper bounds $\ell$ and $L$ for eigenvalues of $A$.  Initialisation $x_0\in\mathbb{R}^d$.
\vspace{0.1cm}

\modgg{Apply Algorithm \ref{alg:RKCD} to the function 
\begin{equation} \label{eq:complin}
f(x)=\tfrac{1}{2}x^{T}Ax+g(x_n)+(x-x_n)^T\nabla g(x_n),
\end{equation}
with gradient $\nabla f(x)=Ax+\nabla g(x_n).$}
%
%
\end{algorithm}
\end{center}
\begin{prop} \label{thm:partitioned}
Let $f$ be given by (\ref{eq:comp}) where $A\in\mathbb{R}^{d\times d}$ is a positive definite  matrix with largest and smallest eigenvalues $L$ and $\ell > 0$, respectively, and condition number $\kappa=L/\ell$. Let $\gamma >0$ and $g$ be a $\beta$-smooth convex function for which $0<\beta<C(\eta) \gamma \ell$ where \modb{$C(\eta)= \omega_{1}  \alpha_{s}(\eta) / (\omega_{0}-1)$} and the parameters of PRKCD are chosen according to conditions (\ref{eq:condition_2a}). 
Then the iterates of PRKCD  satisfy
\begin{equation} \label{eq:decay_linearized}
f(x_{n+1})-f(x_{*}) \leq (1+\gamma)^{2}\alpha^{2}_{s}(\eta) (f(x_{n})-f(x_{*}))
\end{equation}
with $x_*$ the minimizer of $f$.
\end{prop}	
Roughly speaking, Proposition~\ref{thm:partitioned} states that the effective convergence rate of PRKCD matches that of RKCD in~\eqref{eq:decayf} up to a factor of $(1+\gamma)^2$, as long as 
\begin{equation}\label{eq:condconv}
(1+\gamma) \alpha_s(\eta) < 1
\end{equation}
 to guarantee \moda{the}  convergence \modgg{of Algorithm \ref{alg:CRKCD}}. 
Since $\lim_{s\rightarrow \infty}(1+\gamma)^{2/s}=1$, the effective rate in
\eqref{eq:decay_linearized} is equivalent to $c_{rkcd}(\kappa)$ in the limit of a large condition number $\kappa$.
Indeed, the numerical evidence in Figure \ref{fig:figc1} 
suggests that PRKCD remains efficient for moderate values of the damping parameter $\eta$. In particular, for $\moda{\eta = } \eta_{0}=1.17$, we have \modgg{that $c_{rkcd}\simeq c_{agd}=(1-2/\sqrt{3\kappa +1})^2$ and} $C(\eta_{0})\simeq 0.59$ when $\kappa \gg1$. 
\modgg{Using \eqref{eq:condition_2a},\eqref{eq:rate} yields that $\alpha_s(\eta_0)$ is close to $\lim_{\kappa \rightarrow \infty} c_{agd}^{s/2} \simeq 0.413$, and the convergence condition \eqref{eq:condconv} holds for $\gamma< 1/\alpha_s(\eta_0)-1$ and $\beta/\ell < C(\eta_0)\gamma \simeq 0.83$ for $\kappa \gg1$, where $\beta$ is the smoothness parameter of $g$ and $\ell$ the smallest eigenvalue of $A$.}

\modgg{
\begin{remark} \label{rem:g1g2}
For the case of a stiff objective function $g_1$ perturbed by a nonstiff and possibly costly nonquadratic function $g_2$, the PRKCD method can be generalized as follows to minimize the function
\begin{equation} \label{eq:g1g2}
f(x)=g_1(x)+g_2(x).
\end{equation}
 Similar to Algorithm \ref{alg:CRKCD}, one can simply apply Algorithm \ref{alg:RKCD} to the modified objective function where $g_2(x)$ is replaced by $g_2(x_n)+(x-x_n)^T\nabla g_2(x_n)$ in \eqref{eq:g1g2}.
Note that the corresponding method coincides with Algorithm \ref{alg:CRKCD} in the case of a quadratic function~$g_1(x)=\tfrac{1}{2}x^{T}Ax$ perturbed by $g_2(x)=g(x)$.
\end{remark}
}

\section{Numerical examples \label{sec:numerics}}
In this section we illustrate the performance of of RKCD (Algorithm~\ref{alg:RKCD}) and PRKCD (Algorithm~\ref{alg:CRKCD}) for solving \moda{problem} ~\eqref{eq:main} for a number of different test problems. In all the test problems, we  \moda{will} compare our method with optimally-tuned gradient descent (GD), given by 
\[
x_{k+1}=x_{k}-\frac{2}{\ell+L} \nabla f(x_{k}),
\]
as well as \moda{the} accelerated gradient descent  (AGD), given \moda{in \cite{N14} as}
\begin{align*}
x_{k+1} &= y_{k}-\frac{1}{L} \nabla f(y_{k}), \\
y_{k+1} &=x_{k+1} +\frac{\sqrt{L}-\sqrt{\ell}}{\sqrt{L}+\sqrt{\ell}}(x_{k+1}-x_{k}).
\end{align*}

\modb{Two of our examples are for quadratic stiff problems, either pure  or perturbed by a non-stiff term, while the other test problems are for stiff non-quadratic problems.  This is only to verify the theoretical analyses of Propositions~\ref{prp:basic}--\ref{thm:partitioned}. We do not advocate RKCD and PRKCD for such problems since exponential integrators with (rational) Krylov subspace techniques might be more appropriate then; see, e.g.,~\cite{Hochbruck:2010,druskin2010adaptive}. However, the experiments below will show that our methods also perform well for minimizing non-quadratic strongly convex functions, which our theoretical results do not cover.}

\paragraph{Strongly convex quadratic programming:} We consider the quadratic 
function  $f(x) = \tfrac{1}{2} x^T A x - x^T b$, with $A \in \mathbb{R}^{n \times n}$ 
a random matrix drawn from the Wishart distribution\moda{, namely,} $A \sim \frac{1}{m}W_{n}(I_{n},m)$ \moda{with} $n=4800, m=5000$, and \moda{the entries of} $b \in \mathbb{R}^{n}$ \moda{are independently drawn from the standard Gaussian distribution} $\mathcal{N}(0,1)$. For this matrix, \moda{with high probability}, we have the following  estimates \moda{from \cite{vershynin2010introduction}} for its largest and smallest eigenvalues
\[
L=\left(1+\sqrt{\frac{n}{m}} \right)^{2},  \quad \ell=\left(1-\sqrt{\frac{n}{m}} \right)^{2}
\]
giving $\kappa \approx 10^4$. Since this problem is quadratic we \moda{will} also solve it using the conjugate gradient method (CG). 

We plot $f(x_k)-f(x_{*})$ for  the RKCD, GD, AGD and CG in Figure \ref{fig:quad}. As predicted by \moda{Proposition \ref{prp:eff}}, as the damping parameter $\eta$ for RKCD increases, the behaviour of \moda{RKCD approaches that}  of CG. Furthermore, \moda{even for the modest damping parameter of} $\eta \approx 1.17$, RKCD \moda{is comparable} to AGD.

\paragraph{Regularised logistic regression:} We \moda{now} study our first example not 
covered by our theoretical results. \moda{Consider }the objective function 
$$
 f(x) = \sum_{i=1}^m \log(1 + \exp(-y_i \xi_i^T x)) + \tfrac{\tau}{2} \|x\|^2_2,
$$
where $\Xi = \begin{bmatrix} \xi_1 & \cdots & \xi_m \end{bmatrix}^T \in \mathbb{R}^{m \times d}$ is the design matrix \moda{with the columns $\{\xi_i\}_i$,} and $y \in \{ -1, 1 \}^d$. \moda{Note that} $f \in \mathcal{F}_{\ell,L}$ with $\ell=\tau$ and $L=\tau + \|\Xi\|_2^2/4$. As in~\cite{Scieur:2016}, we used the Madelon UCI dataset with $d = 500$, $m = 2000$, and $\tau =10^2$. This \moda{results in} a very poorly conditioned problem with $\kappa = L/\moda{l} \approx 10^9$.

We plot  $f(x_k)-f(x_{*})$ for  the RKCD, GD, and AGD in Figure \ref{fig:logistic}. We again observe that, as the damping parameter increases, the 
convergence \moda{rate of RKCD} improves, \moda{thus requiring} half \moda{of the} number of evaluations \moda{needed} for the RKCD to achieve the same level of accuracy \moda{of} $10^{-5}$, \moda{compared to} AGD. Furthermore, for this problem, RKCD is \moda{comparable} to  AGD \moda{even for the modest damping parameter of} $\eta \approx 1.17$.

\paragraph{Regression with (smoothed) elastic net regularisation:} We now study a regression problem with (smoothed) elastic net regularisation. In particular, \moda{in} the case the objective function is of the form
$
 f(x) = \tfrac{1}{2} \| Ax-b \|_2^2 + \lambda L_\tau(\|x\|_1) + \tfrac{\ell}{2} \|x\|^2_2,
 $
 where $L_\tau(t)$ is the standard Huber loss function with parameter $\tau$ to smooth $|t|$; see~\cite{zou2005regularization}. We used $A \in \mathbb{R}^{m \times d}$ a random matrix drawn from the Gaussian distribution scaled by $1/\sqrt{d}$, $d=3000$, $m=900$, $\lambda=0.2$, $\tau=10^{-3}$, $\moda{l}=10^{-2}$. The objective function is in $\mathcal{F}_{\ell,L}$ with $L \approx (1+\sqrt{m/d})^2 + \lambda/\tau + \ell$. The condition number is $\kappa \approx 10^{4}$.

 We plot  $f(x_k)-f(x_{*})$ for  the RKCD, GD, and AGD in Figure \ref{fig:el_net}. The behaviour of all the different methods is very similar to the one for the regularised logistic regression. 

\begin{figure}[htb]
\center
\includegraphics[scale=1.0]{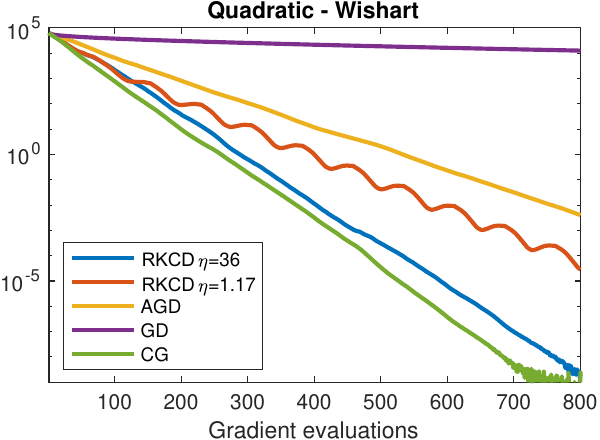}
\caption{Error in function value $f(x_k) - f(x_*)$ for the strongly convex quadratic programming problem.}
\label{fig:quad}
\end{figure}

\begin{figure}[htb]
\center
\includegraphics[scale=1.0]{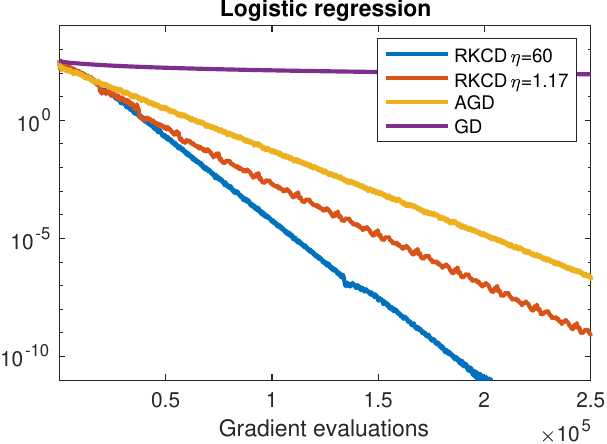}
\caption{Error in function value $f(x_k) - f(x_*)$ for the regularised logistic regresion problem.}
\label{fig:logistic}
\end{figure}

 \begin{figure}[htb]
\center
\includegraphics[scale=1.0]{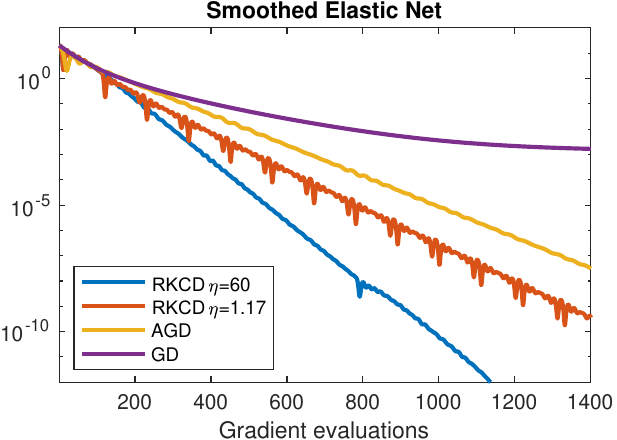}
\caption{Error in function value $f(x_k) - f(x_*)$ for the regression with (smoothed) elastic net regularisation.}
\label{fig:el_net}
\end{figure}

\paragraph{Nonlinear elliptic PDE: } We consider the  one-dimensional integro-differential PDE 
\begin{equation}\label{eq:pde}
\modgg{\frac{\partial}{\partial x} \left(\exp\big(-\alpha u(x)\big) \frac{\partial u(x)}{\partial x}\right)} 
= \int_0^1 \frac{u^{4}(s)}{(1+|x-s|)^2} dx, \quad u(0)=1, \ u(1)=0
\end{equation}
on \moda{the interval} $0\leq x\leq 1$, \modgg{where $\alpha$ is a fixed parameter.
We first consider the semilinear case $\alpha=0$, where the left-hand side of \eqref{eq:pde} reduces 
to the Laplacian $\frac{\partial^{2} u(x)}{\partial x^2}$, which yields a model} describing
the stationary variant of a temperature profile of air near the ground \cite{VaV92}. Using a finite difference approximation $u(i\Delta x) \simeq U_i$ \moda{for} $i=1,\ldots d$ 
on a spatial mesh with size $\Delta x=1/(d+1)$,  
and using the trapezoidal quadrature for the integral, we obtain 
a problem of the form~\eqref{eq:comp} where
$A$ is the usual tridiagonal discrete Laplace matrix of size $d\times d$ \modg{with condition number $\kappa \simeq 4\pi^{-2}d^2$ (using $\lambda_{\max} \sim 4\Delta x^{-2}$ and $\lambda_{\min} \rightarrow \pi^2$ as $\Delta x\rightarrow 0$)}, 
and the entries of \moda{the gradient vector} $\nabla g(U) \in\mathbb{R}^d$ are given by
$$
\frac{\partial g(U)}{\partial U_i} =
\frac{\Delta x}{2(1+i\Delta x)^2}
+ \sum_{j=1}^d\frac{\Delta x U_j^4}{(1+\Delta x|i-j|)^2}.
$$
\moda{We observe that,} when the dimension $d$ is large, calculating $\nabla g(x)$ can become very expensive as one needs to calculate a sum over all $j$'s, \moda{which makes PRKCD a particularly attractive option here.}  We plot $f(x_k)-f(x_{*})$ for  the RKCD, PRKCD and GD  in Figure \ref{fig:nl_PDE} for \modgg{$d=200$.
We use the initialization corresponding to the simple function $u(x)=1-x$ which satisfies the boundary conditions in \eqref{eq:pde}.}

\modgg{
In the semilinear case ($\alpha=0$), we see in Figure \ref{fig:nl_PDE} (left) that }
the PRKCD performs similarly to RKCD for the sets of parameters
\modgg{$\eta=10,s=288$ and $\eta=1.17,s=99$}, \modg{where $s$ denotes the number of evaluations per step of 
$\nabla f(x)=Ax+\nabla g(x)$ for RKCD, and respectively the number of matrix-vector products $Ax_n^{j-1}$ for PRKCD (recall that PRKCD needs a single evaluation of the gradient $\nabla g$ per step)}, which corroborates Proposition~ \ref{thm:partitioned}. 
\modgg{For the case of a nonlinear diffusion (with $\alpha=1$ in \eqref{eq:pde}), we consider the natural generalization of the PRKCD method described in Remark \ref{rem:g1g2}. For the discrete nonlinear diffusion term evaluated at point $x_i=i\Delta t$, we use the natural finite difference formula
$$
\frac{\partial}{\partial x} \left(\exp\big(-\alpha u(x_i)\big) \frac{\partial u(x_i)}{\partial x}\right)
\simeq e^{-\alpha U_i}\frac{U_{i+1}-U_i}{\Delta x^2} - 
e^{-\alpha U_{i-1}} \frac{U_{i}-U_{i-1}}{\Delta x^2},
$$
and consider in the algorithms the bounds $\ell = \pi^2e^{-\alpha}$ and $L=4\Delta x^{-2}$ for the spectrum of $\nabla^2 f(x)$.
In Figure \ref{fig:nl_PDE} (right) we observe again excellent performances of the RKCD and PRKCD methods, although such a nonquadratic problem associated to the nonlinear PDE \eqref{eq:pde} is not covered by Proposition~\ref{thm:partitioned}.
}
\begin{figure}[htb]
\centering
    \begin{subfigure}[b]{0.45\textwidth}
        \includegraphics[width=\textwidth]{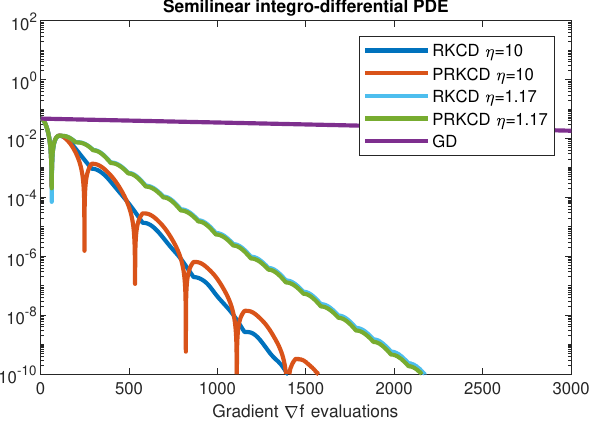}
        \caption{Semilinear PDE ($\alpha=0$)}
        \label{fig:semilinear}
    \end{subfigure}
    \hfill
    \begin{subfigure}[b]{0.45\textwidth}
        \includegraphics[width=\textwidth]{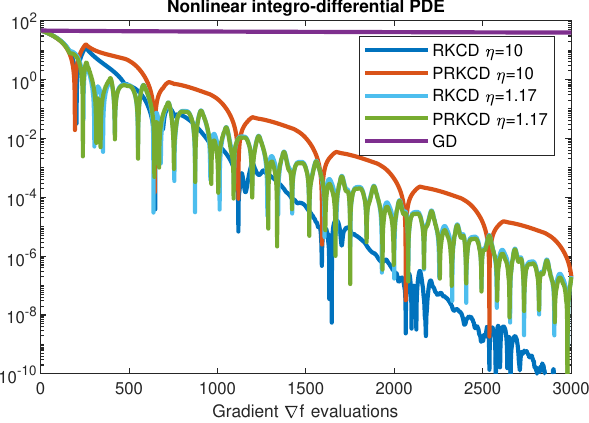}
        \caption{\moda{Semilinear PDE ($\alpha=1$)}}
        \label{fig:nonlinear}
    \end{subfigure}
    ~ 
      \caption{\moda{Error in function value $f(x_k) - f(x_*)$ for the nonlinear PDE problem~\eqref{eq:pde}}.}\label{fig:nl_PDE}
\end{figure}

\paragraph{Smoothed total variation denoising:} Here we consider the problem of image denoising using a smoothed total variation regularisation. In particular the objective function is of the form
\[
f(x)=\frac{1}{2} ||x-y||^{2}+\lambda J_{\epsilon}(x)
\] 
where $y$ is the noisy image\footnote{The original image and the noisy version can be seen in Figure~\ref{fig:images}.} and $J_{\epsilon}(x)$ is is a smoothed total variation of the image.

 \begin{figure}
    \centering
    \begin{subfigure}[b]{0.4\textwidth}
        \includegraphics[width=\textwidth]{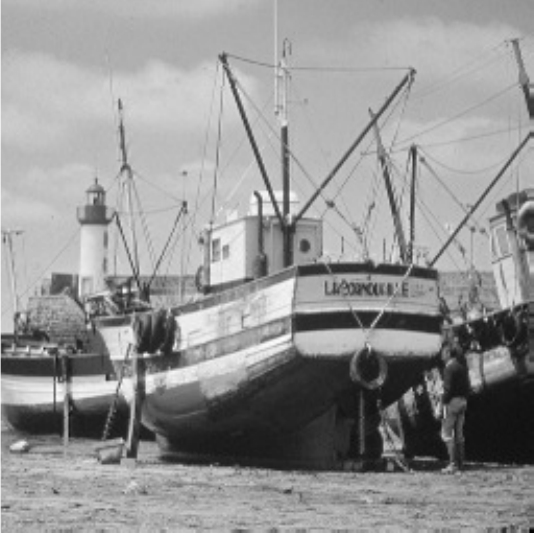}
        \caption{Original image}
        \label{fig:original}
    \end{subfigure}
    \hfill
    \begin{subfigure}[b]{0.4\textwidth}
        \includegraphics[width=\textwidth]{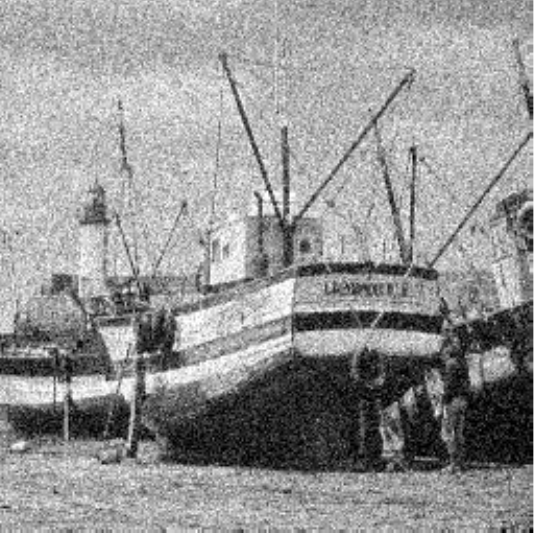}
        \caption{Noisy image}
        \label{fig:noisy}
    \end{subfigure}
    ~ 
      \caption{\moda{Images used for the smoothed total variation denoising.}}\label{fig:images}
\end{figure}
In particular,
\[
J_{\epsilon}(x) =\sum_{i} ||(Gx)_{i}||_{\epsilon}
\]
where $(Gx)_{i} \in \mathbb{R}^{2}$ is an approximation of the gradient of $x$ at  pixel $i$ and for $u \in \mathbb{R}^{2}$ and 
\[
||u||_{\epsilon}=\sqrt{\epsilon^{2}+||u||^{2}}
\] 
a smoothing of the $L^{2}$ norm in $\mathbb{R}^{2}$.

The objective function is in $\mathcal{F}_{\ell,L}$ with $\ell=2$ and $L=2\left(1+\frac{4\lambda}{\epsilon} \right)$.  We choose $\lambda=6 \cdot 10^{-2}, \epsilon=10^{-4}$ and hence the conditioning number is $\kappa \approx 2.4 \times 10^{4}$. We plot $f(x_k)-f(x_{*})$ for  the RKCD,  GD and AGD in Figure \ref{fig:image}a as well as the denoised image in Figure \ref{fig:image}b. 

\begin{figure}[h]
    \centering
    \begin{subfigure}[b]{0.49\textwidth}
        \includegraphics[width=\textwidth]{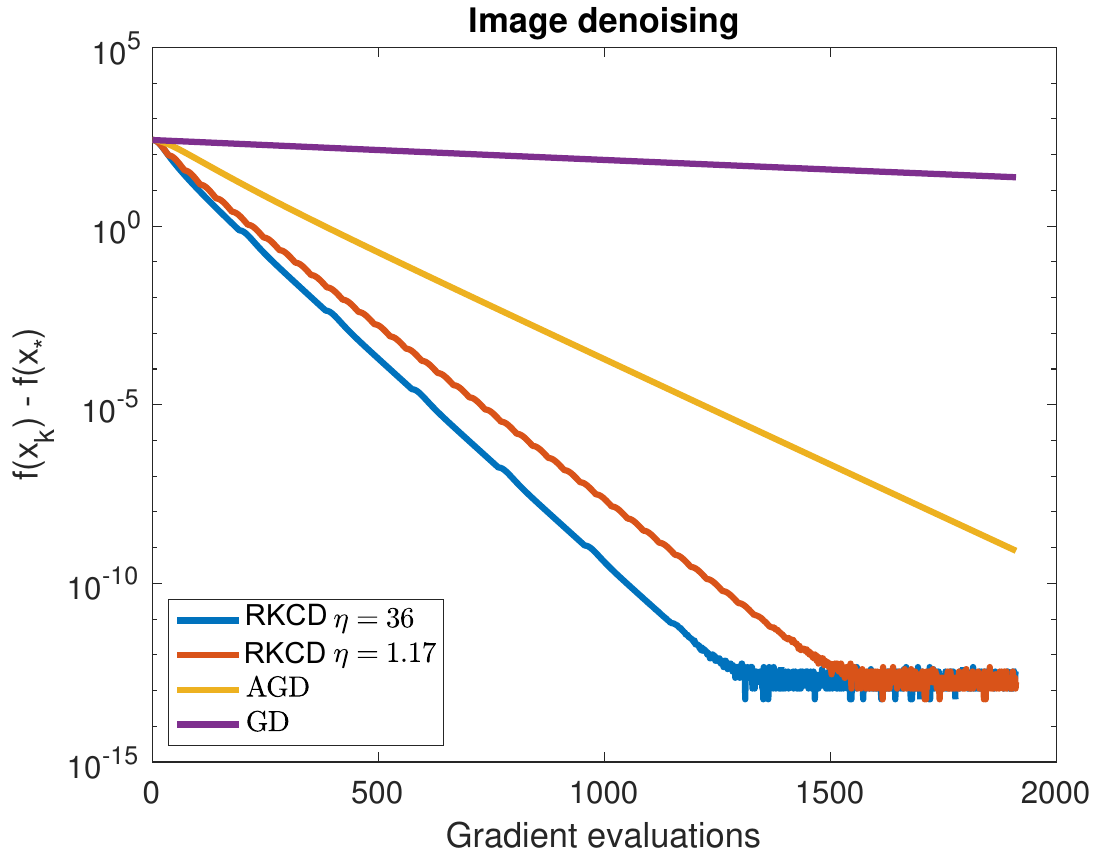}
        \caption{\moda{Error in function values.}}
        \label{fig:decay}
    \end{subfigure}
    \hfill
    \begin{subfigure}[b]{0.4\textwidth}
        \includegraphics[width=\textwidth]{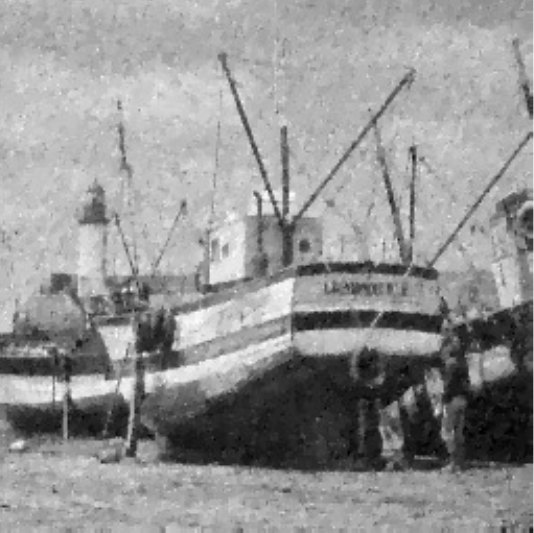}
        \caption{Denoised image}
        \label{fig:denoise}
    \end{subfigure}
    ~ 
      \caption{\moda{Total variation image denoising example.}}\label{fig:image}
\end{figure}

\section{Conclusion}
In this paper, using ideas from numerical analysis of ODEs, we 
introduced a new class of optimisation algorithms for strongly convex 
functions based on explicit stabilised methods. With some care, these new methods RKCD and PRKCD are as easy to 
implement as SD but require in addition a lower bound $\ell$ on the smallest eigenvalue. They were shown to match the optimal 
convergence rates of first order methods for certain subclasses of 
$\mathcal{F}_{\ell,L}$. 

Our numerical 
experiments illustrate that this might be the case for all functions in  $
\mathcal{F}_{\ell,L}$, and proving this is the subject of our current 
research efforts. In addition, there is a number of different interesting 
research avenues for this class of methods, including adjusting them 
to convex optimisation problems with $\ell=0$, as well as for 
adaptively choosing the time-step $h$ using local information to 
optimise their performance further. Furthermore,  their adaptation to 
stochastic optimisation problems, where one replaces the full 
gradient of the function $f$ by a noisy but cheaper version of it, 
is another interesting but challenging direction we aim to investigate further.  

\section*{Acknowledgements}
We thank the referees for their constructive remarks when reviewing this paper. \modgg{This work was partially supported by The Alan Turing Institute by a small  grant award SG020. The research of K.C.Z was
partially supported by the Alan Turing Institute under the EPSRC grant EP/N510129/1. The research of B.V. and G.V.~was partially supported by grant 200020\_178752 of the SNSF (Swiss National Science Foundation). 
The research of G.V.~was partially supported by grant 200020\_184614 of the SNSF.}

\appendix
\numberwithin{equation}{section}
\section{Proof of the main results}
In this section we will discuss the proofs of the main results in the paper
\begin{proof}[Proof of Proposition~\ref{prp:basic}]
 We start our proof by noticing that the gradient flow~\eqref{eq:gradient} in the case of the quadratic function~\eqref{eq:quad} becomes
\[
\frac{dx}{dt}=-Ax+b.
\] 
In addition since $A$ is positive definite and symmetric, there 
exists an orthogonal matrix $V$ such that 
\[
A=VDV^{-1}, \quad D=\text{diag}(\lambda_{1}, \cdots, \lambda_{d}), \quad \lambda_{1} \leq \cdots \leq \lambda_{d}.
\]
If we now make the change of variables $y=V^{-1}x-D^{-1}V^{-1}b$, we obtain the following equation
\begin{equation} \label{eq:gradient_independent}
\frac{dy}{dt}=-Dy.
\end{equation}
Hence, in this coordinate system each coordinate is 
independent of the other, while the objective function  can be 
written as 
\[
f(y)-f(y^{(*)})=\frac{1}{2}\sum_{i=1}^{d}\lambda_{i}y^{2}_{i},  \quad y^{(*)}=0.
\]
We can now write equation~\eqref{eq:gradient_independent} in 
the vector form as
\[
\frac{dy_{i}}{dt}=-\lambda_{i}y_{i},
\]
and hence each coordinate satisfies independently the simple quadratic~\eqref{eq:quad} and the application of Runge--Kutta method with stability function $R(z)$ gives $y^{(n+1)}=R(-h\lambda_{i}) y^{(n)}$ (where $y^{(n)}$ is the $n$th iterate, that is, $y^{(n)} = V^{-1}x_n$). Hence
\begin{eqnarray*}
f(y^{(n+1)})-f(y^{(*)}) &=& \sum_{i=1}^{d}\lambda_{i}\left[R(-h\lambda_{i})y^{(n)}_{i}\right]^{2} \\
		&\leq&	\max_{1 \leq i\leq d}R^{2}(-h \lambda_{i}) \, ( f( y^{(n)})- f(y^{(*)})),
\end{eqnarray*}
which completes the proof.
\end{proof}

\begin{proof}[Proof of Proposition \ref{prp:eff}]
Using~\eqref{eq:alpha} and properties of Chebyshev polynomials we have
\begin{equation*} 
\alpha_{s}(\eta)=\left [\cosh \left( s\operatorname {arcosh}\left(1+\frac{\eta}{s^{2}} \right)\right) \right]^{-1}.
\end{equation*}
Using~\eqref{eq:condition_2a}  and the estimate $\cosh(s x)^{-2/s} \rightarrow e^{-2x}$ for $s\rightarrow \infty$, we deduce 
\begin{align*}
\lim_{\eta \rightarrow \infty}c_{rkcd}(\kappa)&=e^{-2\operatorname {arcosh}\left(1+\frac{2}{\kappa} \right)}\\
&=\left( \frac{\sqrt{\kappa}-1}{\sqrt{\kappa}+1} \right)^{2}+O(\kappa^{-3/2}). 
\end{align*}
\end{proof}

\begin{proof}[Proof of Proposition~\ref{thm:partitioned}]
Our starting point is
equation~\eqref{eq:comp_scheme}. In particular, we will show that if we choose our parameters suitably, then this scheme converges with the rate predicted by the theorem, and we will then show how Algorithm~\ref{alg:CRKCD} corresponds to an implementation of this scheme. 

We start the proof by noting that since $f$ in~\eqref{eq:comp} is strongly convex there exists a  unique minimizer $x_{*}$ satisfying 
\begin{equation} \label{eq:step1}
Ax_{*}+\nabla g(x_{*})=0.
\end{equation}
Thus 
\begin{align*}
x_{n+1}-x_{*} &=R_{s}(-Ah)x_{n}-h B_s(-Ah) \nabla g(x_{n})-x_{*} \\
		     &=R_{s}(-Ah)(x_{n}-x_{*}) \\
         &\quad-hB_s(-Ah) (\nabla g(x_{n})-\nabla g(x_{*}))\\
				&\quad+	(R_{s}(-Ah)-I+hB_s(-Ah)A)x_{*}
\end{align*}
where in the above identity we have used~\eqref{eq:step1} multiplied on the left by $B_s(-Ah)$. Now, by definition of the matrix $B$ we have $R_{s}(-Ah)-I+hB_s(-Ah)A=0$, and we obtain
\begin{align*}
x_{n+1}-x_{*}&=R_{s}(-Ah) (x_{n}-x_{*})\\
 &\quad -hB_s(-Ah) (\nabla g(x_{n})-\nabla g(x_{*}))
\end{align*}
and hence 
\[
||x_{n+1}-x_{*}|| \leq \left(||R_{s}(-Ah)||+h ||B_s(-Ah)|| \beta \right) ||x_{n}-x_{*}||.
\]
We now know from the analysis in the main text that if $h,s$ are chosen according to~\eqref{eq:condition_2a}, then 
\[
||R(-Ah) || \modg{\leq} \alpha_{s}(\eta)
\]
and using the fact that $||B_s(-Ah)|| \leq 1$, \modg{which is a consequence of Lemma \ref{lemmaB} below,} we see that 
\[
||x_{n+1}-x_{*}|| \leq \left[\alpha_{s}(\eta)+\beta \left(\frac{\omega_{0}-1}{\omega_{1}\ell}  \right) \right] ||x_{n}-x_{*}||
\]
and hence
\[
||x_{n+1}-x_{*} || \leq (1+\gamma) \alpha_{s}(\eta) ||x_{n}-x_{*}||
\]
for  $$0< \beta < \beta_{*}=\gamma\mu C(\eta)$$ 
where we define
\begin{equation*} 
C(\eta) = \frac{ \omega_{1}  \alpha_{s}(\eta)}{\omega_{0}-1}.
\end{equation*}
We have thus proved Proposition~\ref{thm:partitioned} for a numerical scheme of the form~\eqref{eq:comp_scheme}. 
It remains to prove that Algorithm \ref{alg:CRKCD} is a  scheme equivalent to~\eqref{eq:comp_scheme}. 
Indeed, the following identity can be proved by induction on $j=2,\ldots,s$, using $T_j(x)=2xT_{j-1}(x)-T_{j-2}(x)$,
$$
x_n^j = R_{s,j}(-hA) x_n -B_{s,j}(-hA)\nabla g(x_n)
$$
where $R_{s,j}(x) = {T_j(\omega_0+\omega_1 x)}/{T_j(\omega_0)}$, $B_{s,j}(x)=(I-R_{s,j}(x))/x$, and we obtain the result~\eqref{eq:comp_scheme} by taking $j=s$.
\end{proof}
\modg{
\begin{lemma} \label{lemmaB}
Consider the polynomial $B(\xi)$ defined in
\eqref{eq:comp_scheme}. 
Then,
$
\max_{\xi \in [-L_{s,\eta},0]} |B(\xi)| = 1.
$
\end{lemma}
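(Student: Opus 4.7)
The plan is to express $B$ as an integral average of $R_s'$ and then show $|R_s'|\le 1$ throughout the interval $[-L_{s,\eta},0]$. First I would note that $R_s(0)=T_s(\omega_0)/T_s(\omega_0)=1$, so $B(\xi)=(1-R_s(\xi))/\xi$ extends continuously to $\xi=0$ via
\begin{equation*}
B(0) \;=\; -R_s'(0) \;=\; -\frac{\omega_1 T_s'(\omega_0)}{T_s(\omega_0)} \;=\; -1,
\end{equation*}
where the last equality uses the defining identity $\omega_1=T_s(\omega_0)/T_s'(\omega_0)$ from \eqref{eq:exp_stab}. This already forces the maximum in the lemma to be at least $1$.

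For the matching upper bound, for any $\xi\in[-L_{s,\eta},0)$ I would write $1-R_s(\xi)=\int_\xi^0 R_s'(t)\,dt$ and estimate
\begin{equation*}
|B(\xi)| \;\le\; \frac{1}{-\xi}\int_\xi^0 |R_s'(t)|\,dt,
\end{equation*}
so it suffices to prove $|R_s'(t)|\le 1$ on $[-L_{s,\eta},0]$. Using $L_{s,\eta}=(1+\omega_0)/\omega_1$ from \eqref{eq:alpha}, the change of variable $u=\omega_0+\omega_1 t$ maps $[-L_{s,\eta},0]$ bijectively onto $[-1,\omega_0]$. On the portion where $u\in[1,\omega_0]$, $T_s'$ is nonnegative and increasing (which can be checked through the parametrisation $u=\cosh\theta$, giving $T_s'(u)=s\sinh(s\theta)/\sinh\theta$, whose monotonicity follows from an elementary computation), so $T_s'(u)\le T_s'(\omega_0)$ and hence $0\le R_s'(t)\le 1$. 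On the portion where $u\in[-1,1]$, Markov's inequality yields $|T_s'(u)|\le s^2=T_s'(1)\le T_s'(\omega_0)$, and thus again $|R_s'(t)|\le 1$.

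The main technical input is Markov's classical inequality for Chebyshev derivatives on $[-1,1]$ together with the monotonicity of $T_s'$ on $[1,\infty)$; every other step is a direct manipulation of the definitions. Combining the two case estimates gives $|B(\xi)|\le 1$ on $[-L_{s,\eta},0]$, and since equality is attained at $\xi=0$ the maximum equals $1$, proving the lemma.
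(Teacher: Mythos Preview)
Your proof is correct and follows essentially the same route as the paper's: both reduce the bound on $|B|$ to the pointwise inequality $|R_s'|\le 1$ on $[-L_{s,\eta},0]$ and then verify this via the same two-case split $u\in[-1,1]$ versus $u\in[1,\omega_0]$. The only cosmetic differences are that the paper invokes the mean value theorem (so $-B(\xi)=R_s'(z)$ for some intermediate $z$) where you use an integral average, and the paper bounds $|T_s'(\cos\theta)|$ via the explicit identity $|\sin(s\theta)|/(s|\sin\theta|)\le 1$ where you cite Markov's inequality; your explicit verification that $|B(0)|=1$ is a small addition, since the paper's argument only establishes the upper bound.
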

\begin{proof}
By the Lagrange theorem, for $\xi$ in the interval $(-L_{s,\eta},0)$, there
exists $z$ in the same interval such that $-B(\xi)=(R_s(z)-1)/z=R_s'(z)$, and using the definition of $R_s(z)$ in \eqref{eq:exp_stab}, we have for 
$x=\omega_0 +\omega_1 z$,
$$
R_s'(z) = \frac{T_s'(x)}{T_s'(\omega_0)}.
$$
Recalling $x\in [-1,\omega_0]$, 
for the first case where $x \in [1,\omega_0]$, 
we use the fact that $T_s(x)$ and all its derivatives are increasing functions of $x\geq 1$ (this is because by the Rolle theorem all the roots 
of the Chebyshev polynomials $T_s(x)$ and their derivatives are in the interval $[-1,1]$). Hence $|T_s'(x)| \leq |T_s'(\omega_0)|$, which yields
$|R_s'(z)|\leq 1$.
For the second case where 
$x \in [-1,1]$, we have
$x=\cos (\theta)$ for some $\theta$, and we obtain
$$
s^{-2} |T_s'(x)| = s^{-2} |T_s'(\cos(\theta))|
= \frac{|\sin(s \theta)|}{s|\sin \theta|} \leq 1,
$$
where the latter bound can be obtain by an elementary study of the function 
$\sin(s \theta)/(s\sin \theta)$,
while $s^{-2} T_s'(\omega_0) \geq s^{-2} T_s'(1) =1$ is an increasing function of $\eta$, hence $|R_s'(z)|\leq 1$, and this concludes the proof of Lemma~\ref{lemmaB}.
\end{proof}
}



\end{document}